\documentclass{article}
\usepackage{amsmath, amssymb, amsthm}
\usepackage[dvipdfmx]{graphicx}

\usepackage{titlesec}
\titleformat{\section}{\centering\normalfont\normalfont\bfseries}{\thesection.}{2mm}{}

\usepackage{indentfirst}

\numberwithin{equation}{section}

\usepackage{hyperref}
\usepackage[noabbrev,capitalise,nameinlink]{cleveref}
\hypersetup{
    colorlinks=true,
    citecolor=blue,
    linkcolor=blue,
    urlcolor=blue,
}

\usepackage[
    backend=biber,
    style=alphabetic,
    sorting=anyt,
    doi=false,
    uniquelist=false,
    abbreviate=true,
    backref=true,
    maxnames=99,
    maxalphanames=4,
    minalphanames=3
]{biblatex}

\renewbibmacro{in:}{}

\DefineBibliographyStrings{english}{
  backrefpage  = {\hspace{-0.15cm}},
  backrefpages = {\hspace{-0.15cm}},
}

\theoremstyle{definition}
\newtheorem{definition}{Definition}

\newtheorem{thm}{Theorem}
\newtheorem{prop}{Proposition}
\newtheorem{lem}{Lemma}
\newtheorem{rem}{Remark}
\newtheorem{eg}{Example}

\newcommand{\C}{\mathbb C}
\newcommand{\Z}{\mathbb Z}
\newcommand{\T}{\mathbb T}
\newcommand{\I}{\mathcal I}

\def\SL{\mathrm{SL}}
\def\Tr{\mathrm{Tr}}

\title{Integrality of genus-$g$ indices with \\ 
adjoint Reidemeister torsions of twist knots}
\author{Ryoto Tange, Yuji Terashima, and Yoshikazu Yamaguchi}
\date{}

\begin{document}

\maketitle

\begin{abstract}
We consider the sum of the adjoint Reidemeister torsions and prove the integrality for twist knots and the meridian. We also give some concrete examples of the generating functions for these sums. 
\end{abstract}

\section{Introduction}
In the rich and fruitful interplay between the physics and mathematics of quantum fields, quantum field theory continues to provide new perspectives and conjectures that often appear unexpected from a mathematical point of view. Recently, Gang, Kim, and Yoon, based on the 3d–3d correspondence \cite{CCV,DG,DGG,TY1,TY2}, proposed that the index $I_g$ of a three-dimensional gauge theory on the product of a genus-$g$ surface and $S^1$, labeled by a $3$-manifold $M$, can be identified with a sum of Reidemeister torsion associated with $M$. From a physical perspective, the proposal is also intriguing in that it expresses the index—an inherently quantum and highly nontrivial observable—in terms of Reidemeister torsion, a classical topological invariant of $3$-manifolds. See also \cite{BGP,GK,GKP}. Based on this proposal, they formulated a mathematical conjecture asserting that this sum of Reidemeister torsions should be integral, as a consequence of the integrality of the index. 

In this paper, we prove their integrality conjecture in the case where $M$ is the complement of any twist knot. Furthermore, we provide an explicit formula expressing the generating function of the indices $I_g$ for all genera $g$ as a ratio of polynomials, and compute concrete examples of this generating function for specific knots such as the figure-eight knot. According to the proposal of Gang–Kim–Yoon, this also yields, from a physical viewpoint, a new method for obtaining infinitely many indices simultaneously. It would be interesting to connect the recent relationships between gauge theory and quantum knot invariants, such as the colored Jones polynomial through the gauge–quiver correspondence \cite{EKL, KRSS1, KRSS2} and the gauge–knot correspondence \cite{MTT}, with the relationship between adjoint Reidemeister torsion and gauge theory studied in this paper, via refinements of the volume conjecture.

This paper is organized as follows:
In Section 1, we recall the character varieties and the adjoint Reidemeister torsion. 
In Section 2, we prove the integrality on sums of adjoint Reidemeister torsions for twist knots and the meridian, and give some concrete examples of the generating functions for these sums.

\subsection*{Acknowledgements}
  We would like to express our gratitude to H.~Fuji, D.~Gang, M.~Manabe, and S.~Terashima for helpful discussions.
  The authors are supported by JSPS KAKENHI Grant Number 22H01117, 24K06720 and 25K06969.

\section{Preliminaries}

\subsection{Character varieties}

The twist knot $J(-2, -n)$ is illustrated as in Figure~\ref{fig:twistknot}.
Here $|n|$ crossings means $|n|$ right-handed half twists in the box (left-handed if $n$ is negative).

\begin{figure}[ht]
  \centering
  \includegraphics[scale=0.45]{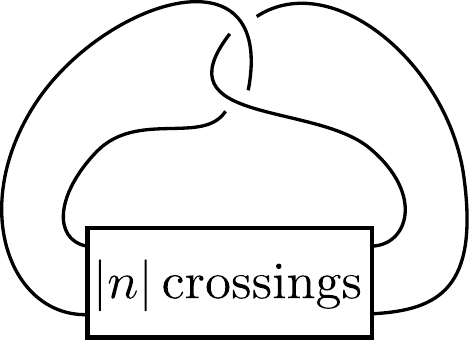}
  \label{fig:twistknot}
  \caption{the diagram of $J(-2, -n)$}
\end{figure}
We consider the topological invariants of twist knot exteriors which are determined by the knot groups. Let $K$ be a twist knot $J(-2, -2l)$ in $S^3$. 
The diagram below illustrates the case of $l=2$, 
which corresponds to the knot $5_2$: 

\begin{figure}[ht]
  \centering
  \includegraphics[scale=0.5]{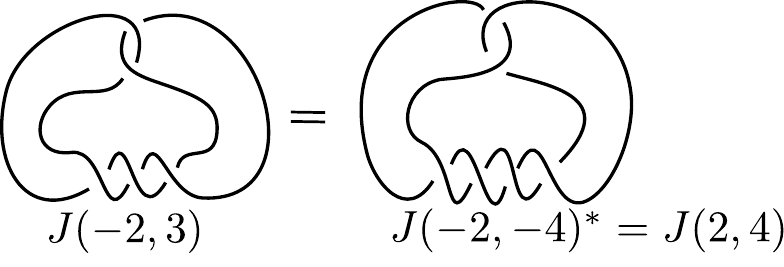}
  \caption{the diagrams of $J(-2, 3)$ and the mirror image of $J(-2, -4)$}
\end{figure}

\begin{rem} {\rm
Knot groups are isomorphic under mirror image. 
Since the twist knot $J(-2, -2l+1)$ is equivalent to the mirror image of $J(-2, -2l)$, that is $J(2, 2l)$, 
we focus on the twist knots $J(-2, -2l)$.
}
\end{rem}

The twist knot $K$ is hyperbolic except for $l=0$ and $1$. In the case of $l=0$ and $1$, $K$ is unknot and the trefoil knot respectively.
We denote by $M_K$ the knot exterior $S^3 \setminus \hbox{int}N(K)$ where $N(K)$ is a tubular neighbourhood of $K$.
The knot group $\pi_1(M_K)$ of $K$ admits the presentation
\[ \pi_1(M_K) = \langle g_1, g_2 \mid  w^{l} g_1 = g_2 w^{l} \rangle, \]
where $w := g_2 g_1^{-1} g_2^{-1} g_1$.
Note that both of $g_1$ and $g_2$ are meridians of $K$. 
For later use, we recall the defining polynomial of the $\SL_2(\C)$-character variety for twist knots.
We restrict our attention to the set of characters for irreducible representations $\rho:\pi_1(M_K) \to \SL_2(\C)$ which is regarded as the set of conjugacy classes of irreducible representations. These sets form algebraic varieties.
The defining polynomial $F(x,z) \in \mathbb{Z}[x,z]$ of the character variety can be expressed in terms of the Chebyshev polynomials as follows (\cite{Yoon2022TwistKnots}):
\[ F(x,z) = (-z+x^2-2) \cdot S_{l} \cdot (S_{l}-S_{l-1}) + 1, \]
where $x := \mathrm{Tr} \rho (g_1)$, $z := \mathrm{Tr} \rho (w)$ for an irreducible representation $\rho \colon \pi_1(M_K) \to \SL_2(\C)$, and
$S_{l} = S_{l}(z)$ denotes the Chebyshev polynomials defined by the recurrence relation 
$S_{l+1}(z) = z \cdot S_{l}(z) - S_{l-1}(z)$ for all $l \in \mathbb{Z}$ with the initial conditions $S_0(z) = 0$ and $S_1(z) = 1$. 
The leading coefficient of $F(x,z)$ with respect to the variable $z$ is always $\pm 1$. 
In particular, $F(x,z)$ is monic in $z$, and we write 
\[ F(x,z) = \pm \{ z^{m} - f_1(x) \cdot z^{m-1} + f_2(x) \cdot z^{m-2} - \cdots + (-1)^{m} \cdot f_{m}(x) \}, \] 
where $m$ is the degree of $F(x,z)$ with respect to $z$.

For example, in the case of $l=-1$, 
corresponding to the figure-eight knot $4_1$, 
we have 
\[ F(x,z) = z^2 - (x^2 - 1)z + (x^2 - 1). \] 
In the case of $l=2$, 
corresponding to the knot $5_2$, 
we have 
\[ F(x,z) = -z^3 + (x^2 - 1)z^2 - (x^2 - 2)z + 1. \]

\subsection{Adjoint Reidemeister torsions}

Next, we recall the adjoint Reidemeister torsion. 
Let $X^{\mathrm{irr}}(M_K)$ denote the character variety of irreducible representations $\pi_1(M_K) \to {\rm SL}_2(\C)$. 
For simplicity, we assume that each irreducible component of $X^{\mathrm{irr}}(M_K)$ is one-dimensional. 
In \cite{Porti1997torsion}, Joan Porti introduced the \emph{adjoint Reidemeister torsion} $\T_\gamma$ as a function defined on a Zariski open subset of $X^{\mathrm{irr}}(M_K)$, depending on the choice of a boundary curve $\gamma$. Here, a boundary curve refers to a simple closed curve in $\partial M_{K}$ representing a non-trivial class in $H_1(\partial M_K; \Z)$. 

For an irreducible representation $\rho \colon \pi_1(M_K) \to {\rm SL}_2(\C)$ with character $\chi_\rho$, the value $\T_{\gamma}(\chi_\rho)$ is given by the Reidemeister torsion associated with the adjoint action of $\rho$. 
The definition requires a choice of $\gamma$ in order to fix a basis of the corresponding twisted (co-)homology.

Let $\mu$ be the meridian of $K$. 
By \cite[Remark 1]{Yoon2022TwistKnots}, the torsion $\T_{\mu}(\chi_\rho)$ can be expressed in terms of the derivative of 
the defining polynomial $F = F(x,z)$ of the character variety 
as follows:
\[ \T_{\mu}(\chi_\rho) = \frac{1}{2} \cdot \frac{\partial F}{\partial z}. \]

\section{Integrality for twist knots and the meridian}

We prepare some notation to observe the integrality for twist knots.
The adjoint Reidemeister torsion $\T_{\mu} = (1/2) \cdot (\partial F / \partial z)$ is defined as a non-zero complex number. We restrict the domain of $\partial F / \partial z$ under the constraint $\partial F / \partial z \not =0$.
In other words, since $\partial F / \partial z$ as a function in $x$ on $F(x, z)=0$,
we restrict the domain of $x$ such that $F(x, z)=0$ has distinct roots in $z$ for each $x$.
\begin{definition}
  Let $\{ a_1, \dots, a_{m} \}$ 
  be the distinct roots $z=a_i$ of $F(x, z)=0$, 
  which correspond to the set $\{ \chi_{\rho_1}, \dots, \chi_{\rho_{m}} \}$ of generic irreducible characters. 
  Define 
  \[ b_i := d_{\mu} \cdot \T_{\mu}(\chi_\rho)|_{z=a_i} = \left. \frac{\partial F}{\partial z} \right|_{z=a_i}, \]
  where $d_\mu = 2$ is chosen to satisfy the condition of the Gang--Kim--Yoon Conjecture for the meridian case \cite[Conjecture 3.1]{GangKimYoon2021M5branes}. 
\end{definition}

The integrality for twist knots and the meridian $\mu$ can be proved by using the following Lemma:
\begin{lem} \label{lem:elemPolyb}
The elementary symmetric polynomials $e_j(b_1, \dots, b_{m})$ 
are polynomials in $x$ with integer coefficients for any $j$, 
i.e., $e_j(b_1, \dots, b_{m}) \in \Z[x]$. 
\end{lem}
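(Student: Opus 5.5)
The plan is to use the fundamental theorem of symmetric polynomials over $\Z[x]$. Write
\[
F(x,z)=\pm\prod_{i=1}^{m}(z-a_i),
\]
which is legitimate because $F$ is monic in $z$ up to the sign $\pm1$, with coefficients in $\Z[x]$. Then $\pm f_j(x)=e_j(a_1,\dots,a_m)$, up to the fixed overall sign, and these lie in $\Z[x]$.

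Next, set $G(x,z):=\partial F/\partial z\in\Z[x][z]$, so that $b_i=G(x,a_i)$. Since $G$ is a polynomial in $z$ with coefficients in $\Z[x]$, each $e_j(b_1,\dots,b_m)=e_j(G(x,a_1),\dots,G(x,a_m))$ is a symmetric polynomial in $a_1,\dots,a_m$ with coefficients in $\Z[x]$. By the fundamental theorem, applied over the coefficient ring $\Z[x]$, it is therefore a polynomial with $\Z[x]$ coefficients in $e_1(a),\dots,e_m(a)$. Those are the $\pm f_j(x)\in\Z[x]$, so $e_j(b)\in\Z[x]$.

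A cleaner way to package this is as a resultant. Consider
\[
\prod_{i=1}^m\bigl(t-G(x,a_i)\bigr)=\pm\,\mathrm{Res}_z\bigl(F(x,z),\,t-G(x,z)\bigr),
\]
which holds up to a sign because $F$ has unit leading coefficient. The resultant is the determinant of a Sylvester matrix with entries in $\Z[x,t]$, so it lies in $\Z[x,t]$. The coefficients of this polynomial in $t$ are $\pm e_j(b_1,\dots,b_m)$, which proves the claim. I would present the argument this way.

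The only point needing care is the leading coefficient, and it is the main (mild) obstacle. The integrality of the $e_j(a)$ uses that the leading coefficient of $F$ in $z$ is a unit $\pm1$, as stated earlier in the paper. If it were a general polynomial in $x$, we would only obtain elements of $\Z[x][1/\mathrm{lc}]$. The assumption that the roots are distinct is not needed for the algebraic identity; it only ensures that the $b_i$ are nonzero torsion values. The factor $1/2$ in $\T_\mu$ is exactly cancelled by $d_\mu=2$, so no denominators appear.
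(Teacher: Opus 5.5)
Your proposal is correct. Your first argument is the paper's proof. Monicity of $F$ in $z$ (up to sign) gives $e_j(a_1,\dots,a_m)=f_j(x)\in\Z[x]$; with the paper's normalization $F=\pm\{z^m-f_1z^{m-1}+\cdots\}$ there is in fact no sign in this identity. Then $e_j(b_1,\dots,b_m)$ is symmetric in the $a_i$, and the fundamental theorem of symmetric polynomials finishes. The paper phrases this over $\Z$, implicitly using $b_i=\pm\prod_{k\neq i}(a_i-a_k)\in\Z[a_1,\dots,a_m]$. You work over the coefficient ring $\Z[x]$, which is equally valid.

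The resultant version you say you would present uses a different key lemma: the product formula $\mathrm{Res}_z(A,B)=\mathrm{lc}(A)^{\deg B}\prod_{A(\alpha)=0}B(\alpha)$ replaces the fundamental theorem. It is correct here. Since $\deg_z(t-\partial F/\partial z)=m-1$, you get $\mathrm{Res}_z(F,\,t-\partial F/\partial z)=(\pm1)^{m-1}\prod_i(t-b_i)$.

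What it buys is an explicit Sylvester-determinant formula for $\prod_i(t-b_i)$, and hence for $E(x,t)=\prod_i(1+b_it)$. This is how one would actually compute the examples. At $t=0$ it gives $e_m(b_1,\dots,b_m)=\pm\mathrm{Res}_z(F,\partial F/\partial z)$. That explains why, in the paper's $5_2$ example, the same two quartic factors appear both as the excluded resultant locus and as $e_3(b_1,b_2,b_3)$.

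It is not logically more elementary, though. The product formula for the resultant is normally proved by the same symmetric-function (or universal-identity) argument, so your version packages the paper's key step rather than avoiding it.
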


\begin{proof}
Since $F(x, z)$ is monic in $z$, 
the coefficient $f_j(x) \in \Z[x]$ of $F(x,z)$ 
can be regarded as the elementary symmetric polynomials $e_j(a_1, \dots, a_{m}) \in \Z[a_1, \dots, a_{m}]$ for any $j$. 
Hence, $e_j(b_1, \dots, b_{m})$ is a symmetric polynomial in $\Z[a_1, \dots, a_{m}]$, and 
by the fundamental theorem of symmetric polynomials, 
$e_j(b_1, \dots, b_{m})$ can be considered as an element of the ring generated over $\Z$ 
by elementary symmetric polynomials $\{e_j(a_1, \dots, a_{m})\}$. 
This implies that $e_j(b_1, \dots, b_{m})$ belongs to $\Z[x]$. 
\end{proof}

Next, for $g \in \Z_{\ge 0}$, we consider 
the index $\I_{g}$ defined as follows:
\[ \I_{g} := \sum_{i=1}^m b_i^{g-1}. \]

The main result of this paper is the following theorem:

\begin{thm} \label{thm:main}
Let $K$ be a twist knot. 
Then for $g \in \Z_{\ge 2}$, the index 
$\I_{g}$ is a polynomial in $x$ with integer coefficients, i.e., $\I_{g} \in \Z[x]$
and the Gang--Kim--Yoon Conjecture holds for the meridian case. 
\end{thm}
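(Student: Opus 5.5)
For $g \ge 2$ the exponent $g-1$ is a positive integer, so $\I_g = p_{g-1}(b_1,\dots,b_m)$ is the $(g-1)$-st power sum of the $b_i$. The plan is to reduce its integrality to \cref{lem:elemPolyb} through Newton's identities, and to avoid the division that appears in the usual form of those identities.

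First, set $e_j := e_j(b_1,\dots,b_m)$. By \cref{lem:elemPolyb}, each $e_j$ lies in $\Z[x]$.

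Second, write $p_k := \sum_i b_i^k$ and recall Newton's identities:
\[ p_k = \sum_{j=1}^{k-1} (-1)^{j-1} e_j\, p_{k-j} + (-1)^{k-1} k\, e_k \quad (1 \le k \le m), \]
\[ p_k = \sum_{j=1}^{m} (-1)^{j-1} e_j\, p_{k-j} \quad (k > m). \]
These express $p_k$ through $e_j$ and $p_i$ with $i<k$, using only integer coefficients and no division. Starting from $p_1 = e_1$, induction on $k$ gives $p_k \in \Z[x]$ for all $k \ge 1$. Alternatively, one can bypass Newton's identities entirely. The power sum $p_k(b_1,\dots,b_m)$ is a symmetric polynomial in $a_1,\dots,a_m$ with integer coefficients, since each $b_i = \partial_z F(x,a_i)$ is a polynomial in $a_i$ with coefficients in $\Z[x]$. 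The argument of \cref{lem:elemPolyb} then applies directly, because the fundamental theorem of symmetric polynomials holds over $\Z[x]$.

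Third, take $k = g-1 \ge 1$ to conclude $\I_g \in \Z[x]$. For $g=1$ we have $\I_1 = m \in \Z$, but the theorem only asserts $g\ge2$. It remains to say why this is the content of the Gang--Kim--Yoon conjecture in the meridian case. The conjecture predicts that $\sum_{\rho} (d_\mu\T_\mu(\rho))^{g-1}$, summed over the generic irreducible characters with fixed meridian trace $x$, is integral. This means it is a polynomial in $x$ with integer coefficients, or an integer at integral specialisations of $x$. Since the $b_i$ run over exactly these torsion values, each scaled by $d_\mu=2$, the statement follows.

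The main obstacle is bookkeeping rather than depth. One must check that the roots $a_i$ are distinct for generic $x$, so that the sum runs exactly over the characters. One must also check that the sign $\pm$ in front of $F$ does not matter. The sign only multiplies each $b_i$ by $\pm1$, so it changes $\I_g$ at most by a global sign, and integrality is unaffected.
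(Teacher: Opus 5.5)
Your proof is correct and essentially the paper's. The paper packages the same computation as $Z(x,t)=\sum_{g\ge 2}\I_g(-t)^{g-2}=\partial_t E(x,t)/E(x,t)$, where $E(x,t)=\prod_i(1+b_it)\in\Z[x][t]$ has constant term $1$, and comparing coefficients in $\partial_t E=E\cdot Z$ gives exactly your Newton recursion; your alternative, applying the fundamental theorem of symmetric polynomials directly to the power sums in the $a_i$, is just the argument of \cref{lem:elemPolyb} applied to $p_k$ instead of $e_k$, and is equally valid.
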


\begin{proof}
Consider the generating function $Z(x,t)$ for $\I_{g}$, that is, 
\[
Z(x,t):= \sum_{g=2}^{\infty} \I_{g} \cdot (-t)^{g-2}.
\]
By using the generating function $E(x,t)$ of 
the elementary symmetric polynomial $e_j(b_1, \dots, b_{m})$ 
given by 
\[
E(x,t):= 1 + \sum_{j=1}^{m} e_j(b_1, \dots, b_{m}) t^j \in \Z[x,t],
\]
we have
\[
\begin{aligned}
Z(x,t)
&=\sum_{i=1}^{m} \sum_{g=2}^{\infty} b_i^{g-1} \cdot (-t)^{g-2}\\
&=\sum_{i=1}^{m} \frac{b_i}{1+b_i t}\\
&=\frac{\partial}{\partial t}\log \left( \prod_{i=1}^{m}\left(1+b_i t\right) \right)\\
&=\frac{\partial}{\partial t}\log E(x,t)\\
&=\frac{\frac{\partial}{\partial t}E(x,t)}{E(x,t)}.
\end{aligned}
\]
due to computations in the theory of symmetric functions. See \cite[Section 1.2]{Macdonald1995}. Since $E(x,t)$ is a polynomial in $t$ with coefficients in $\mathbb{Z}[x]$ starting with $1$, $1/E(x,t)$ can be expanded as a formal power series in $t$ with coefficients in $\mathbb{Z}[x]$. Therefore, 
\[
Z(x,t) = \sum_{g=2}^{\infty} \I_{g} \cdot (-t)^{g-2}.
\]
is a formal power series in $t$ with coefficients in $\mathbb{Z}[x]$, and the index $I_{g}$ is an element in $\mathbb{Z}[x]$.
\end{proof}

The proposition below, which was shown in the proof above, is a crucial result that controls the infinite indices $I_g$ for all $g \in \Z_{\ge 2}$ using a finite amount of information; therefore, we restate it here.
\begin{prop}
We have 
\[
Z(x,t)=\frac{\frac{\partial}{\partial t}E(x,t)}{E(x,t)}
\]
where $E(x,t)$ is the generating function of 
the elementary symmetric polynomial $e_j(b_1, \dots, b_{m})$ 
given by 
\[
E(x,t)= 1 + \sum_{j=1}^{m} e_j(b_1, \dots, b_{m}) t^j \in \Z[x,t]
\]
\end{prop}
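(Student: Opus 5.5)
The identity is an equality of formal power series in $t$ whose coefficients are symmetric functions of $b_1,\dots,b_m$. I will verify it first in $\C[[t]]$ for each fixed admissible $x$, and then read it coefficientwise in $\Z[x]$. The first step is to factor the generating function. Because the elementary symmetric polynomials are defined by $\prod_{i=1}^m(1+b_it)=1+\sum_{j=1}^m e_j(b_1,\dots,b_m)t^j$, we have
\[
E(x,t)=\prod_{i=1}^{m}(1+b_i t).
\]
By Lemma~\ref{lem:elemPolyb}, every coefficient lies in $\Z[x]$, so $E(x,t)\in\Z[x,t]$ and its constant term in $t$ is $1$.

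The second step is to compute the logarithmic derivative of this product. No analytic logarithm is needed: the Leibniz rule for a product gives
\[
\frac{\partial E/\partial t}{E}=\sum_{i=1}^m \frac{b_i}{1+b_it},
\]
as an identity of rational functions in $t$, which can also be read in $\C[[t]]$. Each factor $1+b_it$ is invertible in $\C[[t]]$ because its constant term is $1$. Expanding the geometric series gives
\[
\frac{b_i}{1+b_it}=\sum_{k\ge 0}b_i^{k+1}(-t)^k .
\]
Summing over $i$ and setting $g=k+2$ turns the right-hand side into $\sum_{g\ge2}\big(\sum_i b_i^{g-1}\big)(-t)^{g-2}=\sum_{g\ge 2}\I_g(-t)^{g-2}$, which is $Z(x,t)$ by definition.

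The final step is to pass from numbers to polynomials in $x$. Since $E$ has constant term $1$ in $t$, it is a unit in $\Z[x][[t]]$. The quotient $(\partial_tE)/E$ therefore makes sense there, and each of its coefficients is a polynomial in $x$. The identity from the second step holds at every admissible value of $x$, and the admissible $x$ form a Zariski-dense set: the discriminant condition only excludes finitely many values. Hence the coefficients on both sides agree as polynomials in $x$.

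The only point needing care is this last one: making sure the $b_i$, which are defined only for generic $x$, are handled consistently. One can avoid it entirely by working in the splitting field of $F$ over $\C(x)$, treating the $a_i$ and $b_i$ as algebraic functions there. The argument is then purely formal. It is also the Newton-identity relation $p_k$ versus $e_j$ from \cite[Section 1.2]{Macdonald1995}, specialized to $b_1,\dots,b_m$.
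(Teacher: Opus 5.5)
Your proposal is correct and follows the paper's argument: you factor $E(x,t)=\prod_{i=1}^m(1+b_it)$, take the logarithmic derivative to get $\sum_i b_i/(1+b_it)$, and expand each term as a geometric series to recover $\sum_{g\ge2}\I_g(-t)^{g-2}$. Your extra care (working in $\Z[x][[t]]$, and passing from generic $x$ to polynomial identities via density or the splitting field of $F$ over $\C(x)$) is a harmless refinement of what the paper does pointwise; like the paper, it tacitly uses that $F$ is separable in $z$, so that only finitely many $x$ are excluded.
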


For $g=1$, the sum $\mathcal{I}_1$ coincides with the number $m$ of roots of $F(x, z) = 0$ with respect to $z$.
The properties of symmetric polynomials yield the following proposition for $g=0$, which provides an alternative proof of a result by Yoon \cite{Yoon2022TwistKnots}. 

\begin{prop} \label{prop:g=0}
Let $K$ be a hyperbolic twist knot 
$J(-2,-2l)$, namely $l \neq 0, 1$. 
Then
we have 
\[ \I_0 = 0. \]
\end{prop}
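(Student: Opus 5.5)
We need $\I_0=\sum_i 1/b_i=\sum_i 1/F_z(x,a_i)=0$. Since $F$ is (up to sign) monic of degree $m$ in $z$ with distinct roots $a_1,\dots,a_m$, we have $F_z(x,a_i)=\pm\prod_{j\ne i}(a_i-a_j)$. The plan is to invoke the classical Lagrange-interpolation (Euler–Jacobi) identity: for distinct $a_1,\dots,a_m$,
\[
\sum_{i=1}^m \frac{a_i^k}{\prod_{j\neq i}(a_i-a_j)} = \begin{cases} 0 & 0\le k\le m-2,\\ 1 & k=m-1.\end{cases}
\]
Taking $k=0$ gives $\I_0=\pm\sum_i 1/\prod_{j\ne i}(a_i-a_j)=0$, provided $m\ge 2$. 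Equivalently, $\I_0$ is minus the sum of the residues at finite points of the rational function $1/F(x,z)$ in $z$. Its partial fraction expansion is $\sum_i \frac{1}{F_z(a_i)(z-a_i)}$, and comparing the $z^{-1}$ coefficient at infinity, where $1/F=O(z^{-m})$, forces the sum to vanish when $m\ge 2$. In the language of symmetric functions, the paper's own viewpoint, I would note that $\sum_i 1/\prod_{j\ne i}(a_i-a_j)$ is the complete homogeneous symmetric polynomial $h_{1-m}$, which vanishes for $m\ge 2$. This gives the "alternative proof" flavor.

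The remaining step, and the only real content, is to show that $m=\deg_z F\ge 2$ exactly when $l\neq 0,1$. From $F=(-z+x^2-2)S_l(S_l-S_{l-1})+1$, the degree in $z$ is $1+\deg S_l+\deg(S_l-S_{l-1})$. For $l\ge 1$ we have $\deg S_l=l-1$ and $\deg(S_l-S_{l-1})=l-1$, so $m=2l-1$, which is $\ge 3$ for $l\ge 2$. For $l\le -1$ I would use $S_{-k}=-S_k$: then $S_l=-S_{|l|}$ has degree $|l|-1$, and $S_l-S_{l-1}=S_{|l|+1}-S_{|l|}$ has degree $|l|$, so $m=2|l|\ge 2$. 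For $l=1$, $S_1=1$ and $S_0=0$ give $m=1$, the trefoil, where $\I_0=\pm1\ne0$. This is consistent with the hypothesis excluding $l=1$. The examples $m=2$ for $4_1$ and $m=3$ for $5_2$ confirm the count.

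The main obstacle is really a bookkeeping issue: the identity requires the $a_i$ to be distinct, which is exactly the genericity restriction on $x$ already imposed in the Definition. The resulting identity holds for all such $x$, so $\I_0=0$ as a rational function in $x$. I would also remark that the overall sign $\pm$ of $F$ does not affect vanishing.
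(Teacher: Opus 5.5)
Your proof is correct. Its skeleton is the paper's: write $b_i=F_z(x,a_i)=\pm\prod_{j\neq i}(a_i-a_j)$ and reduce $\I_0=0$ to the identity $\sum_i 1/\prod_{j\neq i}(a_i-a_j)=0$ for $m\ge 2$ distinct roots. You differ in how that identity is proved and in what is checked around it.

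The paper multiplies by the Vandermonde polynomial $V$ and uses $V/\prod_{i\neq j}(a_j-a_i)=(-1)^{m-j}\widehat{V_j}$. It then recognizes $\sum_j(-1)^{m-j}\widehat{V_j}$ as the cofactor expansion of a determinant with two equal columns of ones. This is elementary linear algebra but needs careful sign bookkeeping.

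Your partial-fraction/residue-at-infinity argument is shorter and works with $F_z$ directly, so the overall sign of $F$ never enters. At no extra cost it also gives the more general Euler--Jacobi vanishing $\sum_i p(a_i)/F_z(x,a_i)=0$ for any $p$ with $\deg_z p\le m-2$. The $h_{1-m}$ viewpoint additionally explains the trefoil value $\I_0=\pm1$ at $m=1$. One small slip: $\I_0$ equals the sum of the finite residues of $1/F$, i.e.\ minus the residue at infinity, not minus the finite residues; this is harmless for vanishing.

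More substantively, your degree count via $S_{-k}=-S_k$, giving $m=2l-1$ for $l\ge 1$ and $m=2|l|$ for $l\le -1$, fills a point the paper leaves implicit. Its proof simply posits a factorization ``where $m\geq 2$''. You actually derive $m\ge 2$ from $l\neq 0,1$, which is the only place that hypothesis is used.
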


\begin{proof}
Suppose that $F(x,z)$ can be factored as $F(x,z) = \pm \prod_{i=1}^{m} (z-a_i(x))$, where $m \geq 2$, and 
$a_1(x), \dots, a_{m}(x)$ are distinct roots. 
Since 
\[ \frac{\partial F}{\partial z}(x, a_j(x)) = \pm \prod_{1 \le i \le m, \ i \neq j} (a_j(x) - a_i(x)), \]
consider the sum $\I_0(x)$ expressed as
\[ \I_0(x) := \pm \sum_{j=1}^{m} \frac{1}{\prod_{i \neq j} (a_j(x) - a_i(x))}. \]
Let $V(x)$ be the Vandermonde polynomial of the $m$ roots $a_1(x), \dots, a_{m}(x)$ given by
\[ V(x) := \prod_{1 \le i < k \le m} (a_k(x) - a_i(x)) \in \mathbb{C}[a_1(x), \dots, a_{m}(x)]. \]
Then we have 
\[
\begin{aligned}
\frac{V(x)}{\prod_{i \neq j} (a_j(x) - a_i(x))} 
&= \frac{\prod_{i < j} (a_j(x) - a_i(x)) \cdot \prod_{k > j} (a_k(x) - a_j(x)) \cdot \widehat{V_j}(x)}{\prod_{i < j} (a_j(x) - a_i(x)) \cdot (-1)^{m-j} \prod_{k > j} (a_k(x) - a_j(x))} \\
&= (-1)^{m-j} \widehat{V_j}(x).
\end{aligned}
\]
where $\widehat{V_j}(x)$ denotes the Vandermonde polynomial of the $m-1$ roots omitting $a_j(x)$. 
Hence, we have 
\[
\begin{aligned}
\I_0(x) \cdot V(x) 
&= \pm \sum_{j=1}^{m} (-1)^{m-j} \widehat{V_j}(x) \\
&= \pm \det \begin{pmatrix}
1 & a_1(x) & a_1(x)^2 & \cdots & a_1(x)^{m-2} & 1 \\
1 & a_2(x) & a_2(x)^2 & \cdots & a_2(x)^{m-2} & 1 \\
\vdots & \vdots & \vdots & \ddots & \vdots & \vdots \\
1 & a_{m}(x) & a_{m}(x)^2 & \cdots & a_{m}(x)^{m-2} & 1
\end{pmatrix} \\ 
&= 0.
\end{aligned}
\]
Hence, we have $\I_0(x) = 0$ for any generic $x \in \C$ where $V(x) \neq 0$. 
Since the condition $V(x) = 0$ holds only at finitely many points in $\C$, 
$\I_0(x)$ vanishes at infinitely many points in $\C$. 
Therefore,  
we have $\I_0(x) \equiv 0$.
\end{proof}

\begin{rem} {\rm
We note that \cref{prop:g=0} does not hold for the trefoil knot $J(-2,-2)$. 
Indeed, the degree of $F(x,z)$ with respect to $z$ is $1$, and 
the torsion satisfies $\T_{\mu}(\chi_\rho) = \frac{1}{2}$ for the unique generic irreducible character $\chi_\rho$, 
which yields $\I_0 = 1$ and 
\[Z(x,t) = \frac{1}{1+t}. \]  
For the computation of $I_0$ in the case of general torus knots, see \cite{TrY}, and for the integrality of $I_g$ for arbitrary $g$ in the case of general torus knots, see \cite{TY2026GKY,MT}.  
}
\end{rem}

\begin{eg} {\rm
Let $K = J(-2, 2)$ corresponding to 
the figure-eight knot $4_1$. 
\begin{figure}[ht]
  \centering
  \includegraphics[scale=0.5]{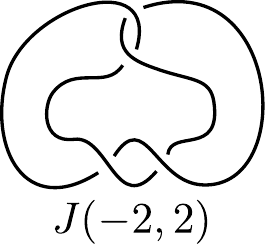}
  \caption{the diagram of $J(-2, 2)=4_1$}
\end{figure}

\noindent
Then we have 
\[ F(x,z) = z^2 - (x^2 - 1)z + (x^2 - 1) = z^2 - f_1(x) \cdot z + f_2(x) = (z -a_1) (z -a_2). \] 
Note that $f_j(x) = e_j(a_1,a_2) \in \Z[x]$ for $j = 1,2$. 
Since $\frac{\partial F}{\partial z} = 2z - f_1(x)$, 
we have 
\begin{align*}
e_1(b_1,b_2)
&= b_1 + b_2 \\
&= 2(a_1 + a_2)
   - 2 f_1(x) \\
&= 2 e_1(a_1,a_2) - 2 e_1(a_1,a_2) \\ 
&= 0, \\
e_2(b_1,b_2)
&= b_1 b_2 \\
&= 4a_1 a_2 - 2  f_1(x) \cdot (a_1 + a_2) + f_1(x)^2 \\
&= -\left(x^2-1\right)\left(x^2-5\right). 
\end{align*}
Actually $b_i$ ($i=1, 2$) turns into $\pm\sqrt{(x^2-1)(x^2-5)}$.
We also exclude the case where the set $\Tr_{\mu}^{-1}(x)$ contains reducible characters, namely when $x^2 - 5 = 0$. 
We set the domain of $x$ as $x \ne \pm 1, \pm \sqrt{5}$.

Then, we have 
\[
\begin{aligned}
E(x,t)&=1-\left(x^2-1\right)\left(x^2-5\right)t^2, \\ 
Z(x,t)
&=\frac{-2\left(x^2-1\right)\left(x^2-5\right)t}{1-\left(x^2-1\right)\left(x^2-5\right)t^2} \\ 
&=-2\left(x^2-1\right)\left(x^2-5\right)t \\
&\qquad 
\cdot \{1 + \left(x^2-1\right)\left(x^2-5\right)t^2 + 
\{ \left(x^2-1\right)\left(x^2-5\right) \}^2 t^4 + \cdots \}, 
\end{aligned}
\]
and so
\[
\I_{2h}=0, \qquad 
\I_{2h+1}= 2\left\{(x^2-1)(x^2-5)\right\}^h \qquad
(h \in \Z_{\ge 0}).
\]
}
\end{eg} 

\begin{eg} {\rm
Let $K = J(-2, -4)$ corresponding to the knot $5_2$. 
\begin{figure}[ht]
  \centering
  \includegraphics[scale=0.5]{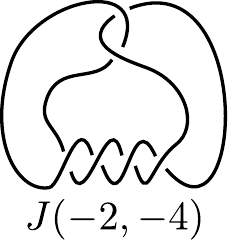}
  \caption{the diagram of $J(-2, -4)$}
\end{figure}

\noindent
Then 
we have 
\begin{align*}
  F(x,z)
  &= -z^3 + (x^2 - 1)z^2 - (x^2 - 2)z + 1 \\
  &= -z^3 + f_1(x) \cdot z^2 - f_2(x) \cdot z + f_3(x) \\
  &= -(z -a_1) (z -a_2) (z -a_3).
\end{align*}
We exclude zeros of $\partial F / \partial z$ on $F(x,z)=0$ from the domain of $x$ according to their resultant
which determines the common roots of two polynomials.
The resultant $\mathrm{Res}(F, F_z)$ of $F$ and $F_z = \partial F / \partial z$ turns out to be
\[\mathrm{Res}(F, F_z) = (x^4 - 2x^3 - 5x^2 + 14x - 7) (x^4 + 2x^3 - 5x^2 - 14x - 7).\]
We also exclude the case where the set $\Tr_{\mu}^{-1}(x)$ contains reducible characters, namely when $2x^2 - 7 = 0$. 
The domain of $x$ is defined as
\[ (2x^2 - 7)(x^4 - 2x^3 - 5x^2 + 14x - 7) (x^4 + 2x^3 - 5x^2 - 14x - 7) \ne 0.\]
Now we turn to compute $E(x, t)$ and $Z(x, t)$. 
Note that $f_j(x) = e_j(a_1,a_2,a_3) \in \Z[x]$ for $j = 1,2,3$. 
Since $\frac{\partial F}{\partial z} = - 3z^2 + 2 f_1(x) \cdot z - f_2(x)$, 
we have 
\begin{align*}
e_1(b_1,b_2,b_3)
&= b_1 + b_2 + b_3 \\
&= -3(a_1^2 + a_2^2 + a_3^2)
   + 2 f_1(x) \cdot (a_1 + a_2 + a_3)
   - 3 f_2(x) \\
&= -3\{e_1(a_1,a_2,a_3)^2 - 2 e_2(a_1,a_2,a_3)\}\\
&\qquad
   + 2 e_1(a_1,a_2,a_3) \cdot e_1(a_1,a_2,a_3)
   - 3 e_2(a_1,a_2,a_3) \\
&= - (x^4-5 x^2+7). 
\end{align*}
Similarly, we have 
\begin{align*}
e_2(b_1,b_2,b_3)
&= b_1 b_2 + b_2 b_3 + b_3 b_1 
= 0, \\
e_3(b_1,b_2,b_3)
&= b_1 b_2 b_3 
= (x^4 - 2x^3 - 5x^2 + 14x - 7)(x^4 + 2x^3 - 5x^2 - 14x - 7). 
\end{align*}
Hence, we have 
\[
\begin{aligned}
  &E(x,t)\\
  &=
1-\left(x^4-5 x^2+7\right)t 
+(x^4 - 2x^3 - 5x^2 + 14x - 7)(x^4 + 2x^3 - 5x^2 - 14x - 7)t^3, \\ 
  &Z(x,t)\\
  &=
\frac{-\left(x^4-5 x^2+7\right) 
+3(x^4 - 2x^3 - 5x^2 + 14x - 7)(x^4 + 2x^3 - 5x^2 - 14x - 7)t^2}{1- \{ \left(x^4-5 x^2+7\right)t 
-(x^4 - 2x^3 - 5x^2 + 14x - 7)(x^4 + 2x^3 - 5x^2 - 14x - 7)t^3 \} } \\ 
&= \{ - \left(x^4-5 x^2+7\right) 
+3(x^4 - 2x^3 - 5x^2 + 14x - 7)(x^4 + 2x^3 - 5x^2 - 14x - 7)t^2 \} \\ 
&\quad \cdot 
[1 + \{ \left(x^4-5 x^2+7\right)t 
-(x^4 - 2x^3 - 5x^2 + 14x - 7)(x^4 + 2x^3 - 5x^2 - 14x - 7)t^3 \} \\ 
&\quad + \{ \left(x^4-5 x^2+7\right)t 
  -(x^4 - 2x^3 - 5x^2 + 14x - 7)(x^4 + 2x^3 - 5x^2 - 14x - 7)t^3 \}^2 \\
&\quad + \cdots ] \\
&= 
  -(x^4-5x^2+7)-(x^4-5x^2+7)^2 t \\
&\qquad - (x^{12}-15x^{10}+93x^8-293x^6+471x^4-357x^2+196) t^2+ \cdots,
\end{aligned}
\]
and so
\[
\begin{aligned}
  \I_{0} &= 0, \\
  \I_{1} &= 3, \\
  \I_{2} &= -(x^4-5 x^2+7), \\
  \I_{3} &= (x^4-5 x^2+7)^2,\\
  \I_{4} &=-(x^{12}-15 x^{10}+93 x^8-293 x^6+471 x^4-357 x^2+196), \quad \dots
\end{aligned}
\]
}
\end{eg}


\printbibliography

@article{BGP,
  title={Rotating black hole entropy from $M5$-branes},
  author={Benini, Francesco and Gang, Dongmin and Pando Zayas, Leopoldo A.},
  journal={J. High Energy Phys.},
  volume={057},
  number={3},
  year={2020},
  eprint={1909.11612}
}

@article{CCV,
  title={Braids, Walls, and Mirrors},
  author={Cecotti, Sergio and Cordova, Clay and Vafa, Cumrun},
  eprint={1110.2115},
  year={2011}
}

@article{DG,
  title={Chern-Simons theory and S-duality},
  author={Dimofte, Tudor and Gukov, Sergei},
  journal={J. High Energy Phys.},
  volume={109},
  number={5},
  year={2013},
  eprint={1106.4550}
}

@article{DGG,
  title={Gauge Theories Labelled by Three-Manifolds},
  author={Dimofte, Tudor and Gaiotto, Davide and Gukov, Sergei},
  journal={Commun. Math. Phys.},
  volume={325},
  number={2},
  year={2014},
  eprint={1108.4389}
}

@article{EKL,
  title={Physics and geometry of knots-quivers correspondence},
  author={Ekholm, Tobias and Kucharski, Piotr and  Longhi, Pietro},
  journal={Commun. Math. Phys.},
  volume={379},
  number={2},
  year={2020},
  eprint={1811.03110}
}

@article{GK,
  title={Large $N$ twisted partition functions in $3$d-$3$d correspondence and holography},
  author={Gang, Dongmin and Kim, Nakwoo},
  journal={Phys. Rev. D},
  volume={99},
  number={2},
  year={2019},
  eprint={1808.02797}
}

@article{GKP,
  title={Precision microstate counting for the entropy of wrapped M$5$-branes},
  author={Gang, Dongmin and Kim, Nakwoo and Pando Zayas, Leopoldo A. },
  journal={J. High Energy Phys.},
  volume={164},
  number={3},
  year={2020},
  eprint={1905.01559}
}

@article{GangKimYoon2021M5branes,
  title={Adjoint Reidemeister torsions from wrapped M5-branes},
  author={Gang, Dongmin and Kim, Seonhwa and Yoon, Seokbeom},
  journal={Adv. Theor. Math. Phys.},
  volume={25},
  number={7},
  pages={1819--1845},
  year={2021},
  doi={10.4310/ATMP.2021.v25.n7.a4},
  eprint={1911.10718}
}

@article{KRSS1,
  title={BPS states, knots and quivers},
  author={Kucharski, Piotr and Reineke, Markus and Stosic, Marko and Sulkowski, Piotr},
  journal={Phys. Rev. D},
  volume={96},
  number={12},
  year={2017},
  eprint={1707.02991}
}

@article{KRSS2,
  title={Knots-quivers correspondence},
  author={Kucharski, Piotr and Reineke, Markus and Stosic, Marko and Sulkowski, Piotr},
  journal={Adv. Theor. Math. Phys.},
  volume={23},
  year={2019},
  eprint={1707.04017}
}

@book{Macdonald1995,
  title={Symmetric functions and Hall polynomials},
  author={Macdonald, Ian Grant},
  year={1995},
  publisher={Oxford Univ. Press}
}

@article{MT,
  title={Algebraic properties of twisted Alexander polynomial and Reidemeister torsion of torus knots},
  author={Morifuji, Takayuki and Tran, Anh T.},
  journal={arXiv:2605.22308},
  year={2026}
}

@article{MTT,
  title={The colored Jones polynomials as vortex partition functions},
  author={Manabe, Masahide and Terashima, Seiji and Terashima, Yuji},
  journal={J. High Energ. Phys.},
  volume={197},
  year={2021},
  eprint={2110.05662}
}

@book{Porti1997torsion,
  title={Torsion de Reidemeister pour les vari{\'e}t{\'e}s hyperboliques},
  author={Porti, Joan},
  volume={612},
  year={1997},
  publisher={Amer. Math. Soc.}
}

@article{TY1,
  title={$SL(2, R)$ Chern-Simons, Liouville, and Gauge Theory on Duality Walls},
  author={Terashima, Yuji and Yamazaki, Masahito},
  journal={J. High Energy Phys.},
  volume={135},
  number={08},
  year={2011},
  eprint={1103.5748}
}

@article{TY2,
  title={Semiclassical Analysis of the $3$d/$3$d Relation},
  author={Terashima, Yuji and Yamazaki, Masahito},
  journal={Phys. Rev. D},
  volume={88},
  number={2},
  year={2013},
  eprint={1106.3066}
}

@article{TY2026GKY,
  title={Gang-Kim-Yoon integrality conjectures on adjoint Reidemeister torsions for torus knots},
  author={Terashima, Yuji and Yamaguchi, Yoshikazu},
  journal={arXiv:2605.19460},
  year={2026}
}

@article{TrY,
  title={Adjoint Reidemeister torsions of once-punctured torus bundles},
  author={Tran, Anh T. and Yamaguchi, Yoshikazu},
  journal={arXiv:2109.07058},
  year={2021}
}

@article{Yoon2022TwistKnots,
  title={A vanishing identity on adjoint Reidemeister torsions of twist knots},
  author={Yoon, Seokbeom},
  journal={Algebr. Geom. Topol.},
  volume={22},
  number={1},
  pages={227--249},
  year={2022},
  doi={10.2140/agt.2022.22.227},
  eprint={2002.12576}
}


\noindent
Ryoto Tange rtange.math@gmail.com \\ 
Department of Mathematics, Rikkyo University, 3-34-1 Nishi-Ikebukuro, Toshima-ku, 171-8501, Tokyo, Japan

\

\noindent
Yuji Terashima yujiterashima@tohoku.ac.jp \\ 
Graduate School of Science, Tohoku University, 6-3 Aoba, Aramaki-aza, Aoba-ku, Sendai, 980-8578, Japan

\

\noindent
Yoshikazu Yamaguchi shouji@waseda.jp \\ 
Faculty of Commerce, Waseda University, 1-6-1 Nishiwaseda,  Shinjuku-ku, Tokyo, 169-8050, Japan

\end{document}